\title{On the functor $\ell^2$}
\author{Chris Heunen}
\address{Department of Computer Science, University of Oxford, Wolfson
  Building, Parks Road, OX1 3QD, Oxford, UK}
\email{heunen@cs.ox.ac.uk}
\date{\today}
\thanks{The author was supported by U.S. Office of Naval Research Grant Number
N000141010357, and would like to thank Samson Abramsky, Rasmus Bentmann, John Bourke, Bart Jacobs, Peter Hines, Robin Kaarsgaard, Martti Karvonen, Prakash Panangaden, Ji{\v{r}}{\'{\i}} Rosicky, Norbert Schuch, and Jamie Vicary for encouragement and pointers.}
\numberwithin{equation}{section}
\theoremstyle{plain}
\newtheorem{theorem}[equation]{Theorem}
\newtheorem{lemma}[equation]{Lemma}
\newtheorem{proposition}[equation]{Proposition}
\newtheorem{corollary}[equation]{Corollary}
\newtheorem*{corollary*}{Corollary}
\theoremstyle{definition}
\newtheorem{definition}[equation]{Definition}
\newtheorem{note}[equation]{}
\newcommand{\after}{\circ}
\newcommand{\blank}{\ensuremath{\underline{\phantom{n}}}}
\newcommand{\cat}[1]{\ensuremath{\mathbf{#1}}}
\newcommand{\Cat}[1]{\ensuremath{\mathbf{#1}}}
\newcommand{\id}[1][]{\ensuremath{\mathrm{id}_{#1}}}
\newcommand{\op}{\ensuremath{^{\mathrm{op}}}}
\newcommand{\inprod}[2]{\ensuremath{\langle #1\,|\,#2 \rangle}}
\newcommand{\field}[1]{\ensuremath{\mathbb{#1}}}
\newcommand{\tensor}{\ensuremath{\otimes}}
\newcommand{\norm}[2][]{\ensuremath{\|#2\|_{#1}}}
\newcommand{\dom}{\ensuremath{\mathrm{dom}}}
\renewcommand{\Im}{\ensuremath{\mathrm{Im}}}
\newcommand{\Dm}{\ensuremath{\mathrm{Dom}}}
\newcommand{\Set}{\Cat{Set}}
\newcommand{\Hilb}{\Cat{Hilb}}
\newcommand{\Hilbi}{\Cat{Hilb}_{\cong}}
\newcommand{\PInj}{\Cat{PInj}}
\newcommand{\one}{\cat{1}}
\newcommand{\two}{\cat{2}}
\newcommand{\three}{\cat{3}}
\newcommand{\ltwo}{\ensuremath{\ell^2}}
\newcommand{\ie}{\textit{i.e.}~}
\newcommand{\eg}{\textit{e.g.}~}
\newcommand{\xyline}[2][]{\ensuremath{\smash{\xymatrix@1#1{#2}}}}
\begin{document}
\maketitle
\begin{abstract}
  We study the functor $\ltwo$ from the category of partial injections
  to the category of Hilbert spaces. The former category is finitely accessible, 
  and its homsets are algebraic domains; the latter category has conditionally algebraic domains for homsets.
  The functor preserves daggers, monoidal structures, enrichment, and
  various (co)limits, but has no adjoints. Up to unitaries, its direct
  image consists precisely of the partial isometries, but its
  essential image consists of all continuous linear maps between
  Hilbert spaces. 
\end{abstract}

I am delighted to dedicate this paper to Samson Abramsky, on the
occasion of his 60th birthday. 
Among all the wisdom he has imparted on me is this contradictory gem:
``Never solve a problem completely, or noone will have a reason to cite you''.  
My better nature gladly took some time off to let this paper follow his advice.

\section{Introduction}

The rich theory of Hilbert spaces underpins much of modern functional
analysis and therefore quantum
physics~\cite{reedsimon:functionalanalysis,kadisonringrose:operatoralgebras},
yet important parts of it have resisted categorical treatment. 
In any categorical analysis of a species of mathematical
objects, free objects of that kind play a significant role.
The important $\ltwo$--construction is in many ways the closest thing
there is to a free Hilbert space: if $X$ is a set, then
\[
  \ltwo(X) = \Big\{ \varphi \colon X \to \field{C} \;\Big|\; \sum_{x \in X}
  |\varphi(x)|^2 < \infty \Big\}
\]
is a Hilbert space, in fact the only one of its dimension up to isomorphism.
The $\ltwo$--construction can 
be made into a functor, if we take partial injections as morphisms
between the sets $X$, as first observed by Barr~\cite{barr:algebraicallycompact}.
Outside functional analysis, it also plays a historically important
role in the geometry of interaction (which has been noticed by many
authors; an incomplete list of references
includes~\cite{danosregnier:goi,abramsky:retracing,haghverdi:phd,haghverdiscott:goi,hines:phd}).

Explicit categorical properties of the $\ltwo$--construction are few
and far between in the  
literature. These notes gather and augment them in
a systematic study. Section~\ref{sec:hilb} starts with the category of Hilbert
spaces: it is self-dual, has two monoidal structures, and its homsets are
algebraic domains, but its enrichment and limit
behaviour is wanting. Section~\ref{sec:pinj} discusses the category of
partial injections, which is more well-behaved: it is also self-dual,
has two monoidal structures, and is enriched over algebraic domains;
moreover, it is finitely  
accessible. Section~\ref{sec:ltwo} introduces and studies the functor 
$\ltwo$ itself. It preserves the self-dualities, monoidal structures,
and enrichment. It also preserves (co)kernels and finite (co)products,
but not general (co)limits. Therefore it has no adjoints, and in that
sense does not provide free Hilbert spaces. It is faithful and
essentially surjective on objects. Section~\ref{sec:image} studies the
image of the functor $\ltwo$. Up to unitaries, its direct image
consists precisely of partial isometries. Remarkably, it is
essentially full, that is, its essential image is the whole category
of Hilbert spaces. 

Choice issues are lurking closely beneath the surface of these results. In
fact, $\ltwo(X)$ is not just a Hilbert space; it carries a priviledged
orthonormal basis. The functor $\ltwo$ is an equivalence between the
category of partial injections, and the category of Hilbert spaces
with a chosen orthonormal basis and morphisms preserving it. But the
latter class of morphisms is too restrictive: all
interesting applications of Hilbert spaces require a change of basis. 
Following the guiding thought ``a gentleman does not choose a
basis'', Section~\ref{sec:conclusion} suggests directions for further
research.

\section{The codomain}\label{sec:hilb}

\begin{definition}
  We are interested in the category $\Hilb$, whose objects are complex
  Hilbert spaces, and whose morphisms are continuous linear
  functions. 
\end{definition}

\begin{note}
  The category $\Hilb$ has a \emph{dagger}, that is, a contravariant
  involutive functor $\dag \colon \Hilb\op \to \Hilb$ that acts as the
  identity on objects. On a morphism $f \colon H \to K$ it is given by
  the unique adjoint $f^\dag \colon K \to H$ satisfying
  $\inprod{f(x)}{y}=\inprod{x}{f^\dag(y)}$. For example, an
  isomorphism $u$ is unitary when $u^{-1}=u^\dag$.
\end{note}

\begin{note}
  Furthermore, the usual tensor product of Hilbert spaces provides the
  category $\Hilb$ with symmetric monoidal structure. The monoidal
  unit is the 1-dimensional Hilbert space $\field{C}$. In fact,
  $\Hilb$ has \emph{dagger symmetric monoidal} structure, \ie $(f
  \tensor g)^\dag = f^\dag \tensor g^\dag$, and all coherence
  isomorphisms are unitaries.
\end{note}

\begin{note}
  Direct sums of Hilbert spaces provide the category $\Hilb$ with
  (finite) \emph{dagger biproducts}. That is, $H \oplus K$ is
  simultaneously a product and a coproduct, the projections are
  the daggers of the corresponding coprojections, and $(f \oplus
  g)^\dag = f^\dag \oplus g^\dag$. Similarly, the 0-dimensional Hilbert space 
  is a \emph{zero object}, \ie simultaneously initial and terminal.
\end{note}

\begin{note}
  Let us emphasize that we take continuous linear maps as morphisms
  between Hilbert spaces, rather than linear contractions. The
  category of Hilbert spaces with the latter morphisms is rather
  well-behaved, see \eg~\cite{adamekrosicky:locallypresentable}. 
  However, it is the former choice of morphisms that is of interest in
  functional analysis and quantum physics. Unfortunately it also
  reduces the limit behaviour of the category $\Hilb$, as the
  following lemma shows.
\end{note}

\begin{lemma} 
\label{limitsinhilb}
  The category $\Hilb$:
  \begin{enumerate}
  \item[(i)] has (co)equalizers;
  \item[(ii)] does not have infinite (co)products;
  \item[(iii)] does not have directed (co)limits.
  \end{enumerate}
\end{lemma}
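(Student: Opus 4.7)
The plan is to handle each part separately, using the dagger of $\Hilb$ to convert between limits and colimits as needed.

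For (i), I would exhibit the equalizer of parallel morphisms $f, g \colon H \to K$ as the inclusion of $\ker(f - g) \subseteq H$; continuity of $f - g$ makes this kernel closed, hence a Hilbert subspace, and the universal property follows directly from linearity. Coequalizers then come for free: if $e$ equalizes $f^\dag$ and $g^\dag$, then $e^\dag$ coequalizes $f$ and $g$.

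For (ii), I would proceed by contradiction. Suppose $P = \prod_{n \in \field{N}} \field{C}$ existed with projections $\pi_n \colon P \to \field{C}$. Applying the universal property to the families $(\delta_{mn} \cdot \id[\field{C}])_m$ produces right inverses for each $\pi_n$, in particular forcing $\norm{\pi_n} > 0$. For any sequence $(a_n) \in \field{C}^{\field{N}}$ the family $(z \mapsto a_n z)_n$ induces $u \colon \field{C} \to P$ with $\pi_n(u(1)) = a_n$, so continuity of $\pi_n$ gives $|a_n| \leq \norm{\pi_n} \cdot \norm{u(1)}$. This forces $(|a_n|/\norm{\pi_n})_n$ to be bounded for every such $a$, which is contradicted by the choice $a_n = n \norm{\pi_n}$. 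The non-existence of countable coproducts then follows from the dagger.

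For (iii), I would apply the same probing technique to the directed diagram $D \colon (\field{N}, \leq) \to \Hilb$ with $D(n) = \field{C}^n$ and connecting maps the standard inclusions. Should a colimit $(L, \iota_n)$ exist, set $v_k = \iota_n(e_k) \in L$ for any $n \geq k$ (independent of $n$ by compatibility). Compatible cocones $(f_n \colon \field{C}^n \to \field{C})_n$ correspond bijectively to sequences $(a_k)_k \in \field{C}^{\field{N}}$ via $a_k = f_n(e_k)$, so the universal property identifies $\Hilb(L, \field{C})$ with $\field{C}^{\field{N}}$ under $g \mapsto (g(v_k))_k$. Realising the delta sequences shows every $v_k$ is nonzero; continuity of $g$ then forces $|a_k| \leq \norm{g} \cdot \norm{v_k}$, so any sequence with $|a_k|/\norm{v_k}$ unbounded, for instance $a_k = k \norm{v_k}$, cannot arise from a morphism out of $L$. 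Directed limits fail by the dagger.

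The main obstacle lies in (iii): no concrete description of the putative $L$ is available, so its structure must be extracted from the universal property alone. The crucial insight is that, whatever the norms $\norm{v_k}$ happen to be, continuity of functionals out of $L$ obstructs arbitrarily fast-growing sequences on the $v_k$, while the universal property insists that every such sequence in $\field{C}^{\field{N}}$ must be realised.
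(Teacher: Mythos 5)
Your proof is correct, and parts (i) and (ii) are essentially the paper's argument: for (i) the paper invokes enrichment in abelian groups together with kernels, which amounts exactly to your $\ker(f-g)$ construction, and for (ii) your tuple $\field{C} \to P$ realising an arbitrary sequence $(a_n)$ is the dagger-dual of the paper's cotuple argument with $f_n(z)=n\cdot\norm{\kappa_n}\cdot z$ (your explicit check that $\norm{\pi_n}>0$ via the delta families is a detail the paper leaves implicit on the coproduct side). The genuine divergence is in (iii). The paper deduces (iii) from (ii) abstractly: since $\Hilb$ has finite biproducts, infinite coproducts could be assembled from finite ones and directed colimits of the finite subcoproducts (citing Mac\,Lane IX.1.1), so directed colimits must fail, and dually for limits. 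You instead refute directed colimits directly, showing that the chain $\field{C}\subseteq\field{C}^2\subseteq\cdots$ has no colimit: cocones into $\field{C}$ are arbitrary scalar sequences, every linear functional on the finite stages is automatically bounded, and a bounded functional on a putative colimit vertex cannot realise a sequence growing like $k\,\norm{v_k}$ once the delta cocones force $v_k\neq 0$. The paper's reduction is shorter and immediately rules out all filtered colimits; your argument is self-contained, exhibits a concrete diagram without a colimit (whose ``conditional'' colimit would be $\ltwo(\field{N})$, cf.~\ref{hilbconditionallycomplete}), and makes visible that the obstruction is once more a failure of uniform boundedness, structurally parallel to (ii). Both treatments then pass to the dual statements via the dagger in the same way.
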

\begin{proof}
  Part (i) holds because $\Hilb$ is enriched over abelian groups and
  has kernels~\cite{heunenjacobs:dagkercat}. For (ii), consider the
  following counterexample. Define an $\field{N}$-indexed 
  family $H_n = \field{C}$ of objects of $\Hilb$. Suppose the
  family $(H_n)$ had a coproduct $H$ with coprojections $\kappa_n
  \colon H_n \to H$. Define $f_n\colon H_n \to \field{C}$ by $f_n(z) =
  n \cdot \norm{\kappa_n} \cdot z$. These are bounded maps, since
  $\norm{f_n}=n \cdot \norm{\kappa_n}$. Then for all $n \in \field{N}$
  the norm of the cotuple $f\colon H \to \field{C}$ of $(f_n)$ must satisfy  
  \[
    n \cdot \norm{\kappa_n} 
    = \norm{f_n} 
    = \norm{f \after \kappa_n}
    \leq \norm{f} \cdot \norm{\kappa_n},
  \]
  so that $n \leq \norm{f}$. This contradicts the boundedness and
  hence continuity of $f$. Finally, part (iii) follows from (ii)
  and~\cite[IX.1.1]{maclane:categories} 
\end{proof}

\begin{note}
\label{hilbconditionallycomplete}
  Despite the previous lemma, $\Hilb$ is \emph{conditionally
  (co)complete}, in the sense that it does have objects that partially
  obey the universal property of infinite (co)products: for a family
  $H_i$ of Hilbert spaces, 
  \[
    H = \big\{ (x_i) \in \prod_i H_i \mid
           \sum_i \|x_i\|^2 < \infty \big\}.
  \]
  is a well-defined Hilbert space under the inner
  product $\inprod{ (x_i) }{ (y_i) } = \sum_i
  \inprod{x_i}{y_i}$~\cite{kadisonringrose:operatoralgebras}. 
  The evident morphisms $\pi_i \colon H \to H_i$ satisfy
  $\pi_i \circ \pi_i^\dag = \id$ and $\pi_i \circ \pi_j^\dag = 0$ when
  $i \neq j$.
  A cone $f_i \colon K \to H_i$ allows a unique well-defined morphism
  $f\colon K \to H$ satisfying $\pi_i \after f = f_i$ if and only if $\sum_i
  \|f_i\|^2 < \infty$. 
  Note, however, that the cone $(\pi_i)$ itself does not satisfy this
  condition.
  In this sense, $\ltwo(X)$ is the conditional coproduct of $X$ many
  copies of $\field{C}$.
\end{note}

\begin{note}\label{note:bimorphism}
  A similar phenomenon occurs for simpler types of
  (co)limits. Monomorphisms in $\Hilb$ are precisely the 
  injective morphisms, and epimorphisms are precisely those
  morphisms with dense range~\cite[A.3]{heunen:embedding}. 
  Not every monic epimorphism is an isomorphism.
  For example, the morphism $f \colon \ltwo(\field{N}) \to \ltwo(\field{N})$
  defined by $f(\varphi)(n) = \frac{1}{n} \varphi(n)$ is injective, self-adjoint,
  and hence also has dense image. But it is not surjective, as the
  vector $\varphi \in \ltwo(\field{N})$ determined by
  $\varphi(n)=\tfrac{1}{n}$ is not in its range.
  Monic epimorphisms are called \emph{bimorphisms}.
\end{note}

\begin{note}
\label{enrichmenthilb}
  If $f,g \colon H \to K$ are morphisms in $\Hilb$, then so are $f+g$
  and $z f$ for $z \in \mathbb{C}$. Because composition respects these
  operations, $\Hilb$ is enriched over complex vector spaces. In
  general, the homsets are not Hilbert spaces
  themselves~\cite{abramskyblutepanangaden:nuclearideals}, so $\Hilb$
  is not enriched over itself, and hence not Cartesian closed. At any rate, there 
  is another way to structure the homsets of $\Hilb$, which is of more interest
  here.  Say $f \leq g$ when $\ker(f)^\perp \subseteq \ker(g)^\perp$
  and $f(x)=g(x)$ for $x \in \ker(f)^\perp$. The following proposition
  shows that this makes all homsets into \emph{conditional algebraic
  domains}, but that this is not respected by composition. 
  Here, by conditional algebraic domain we mean an algebraic domain~\cite{abramskyjung:domaintheory}, except that it does not need to have all directed suprema, but only bounded ones, in the same sense as~\ref{hilbconditionallycomplete}: more precisely, every is a directed family of parallel morphisms $f_i$ in $\cat{Hilb}$ for which $\|f_i\|$ converges has a supremum.
  This is closely related
  to~\cite[2.1.4]{cockettlack:restriction1}, but $\Hilb$ is not a
  restriction category in the sense of that paper: setting
  $\overline{f}$ to be the projection onto $\ker(f)^\perp$ does not
  satisfy $\overline{f} \overline{g} = \overline{g} \overline{f}$.
\end{note}

\begin{proposition}
\label{domainshilb}
  All homsets in the category $\Hilb$ are conditional algebraic domains, but
  composition is not monotone.
\end{proposition}
\begin{proof}
  The least upper bound of a directed family $f_i$ is given by
  continuous extension to the closure of $\bigcup_i \ker(f_i)^\perp$;
  this makes all homsets into directed-complete partially ordered
  sets. If $f \leq \bigvee_i f_i$ always implies $f \leq f_i$ for some
  $i$, then $\ker(f)^\perp$ must have been finite-dimensional; thus
  morphisms $f$ with $\dim(\ker(f)^\perp)<\infty$ are the compact
  elements. It is now easy to see that any morphism is the directed
  supremum of compact ones below it, making all homsets into algebraic
  domains.  

  Now consider composition. First suppose that $f \leq f'$ and $g \leq g'$. 
  If $x \in \ker(f)$, then clearly $gf(x) = 0$. 
  If $x \in \ker(f)^\perp$, then $f(x)=f'(x)$, 
  so $g'f'(x)=0$ implies $f(x)\in \ker(g') \subseteq \ker(g)$.
  Because we may write $\dom(gf)=\ker(f) \oplus \ker(f)^\perp$, we
  conclude 
  $\ker(gf)^\perp \subseteq
  \ker(g'f')^\perp$. But unless $f(\ker(gf)^\perp) \subseteq
  \ker(g)^\perp$, it need not be the case that $gf$ equals $g'f'$ on
  $\ker(gf)^\perp$. For an explicit counterexample, let 
  \[
    f=f'=\begin{pmatrix} 1 & 1 \\ 0 & 1 \end{pmatrix},
    \qquad
    g=\begin{pmatrix} 1 & 0 \\ 0 & 0 \end{pmatrix},
    \qquad
    g'=\begin{pmatrix} 1 & 0 \\ 0 & 1 \end{pmatrix}.
  \]
  Then $f\leq f'$ and $g \leq g'$. But $\ker(gf)^\perp =
  \{ \left(\begin{smallmatrix} x \\ -x \end{smallmatrix}\right) \mid x \in \mathbb{C} \}^\perp =
  \{ \left(\begin{smallmatrix} x \\ x \end{smallmatrix}\right) \mid x \in \mathbb{C} \}$,
  and $gf \left(\begin{smallmatrix} x \\ x \end{smallmatrix}\right) =
  \left(\begin{smallmatrix}2x \\  0 \end{smallmatrix}\right) \neq
  \left(\begin{smallmatrix} 2x \\ x \end{smallmatrix}\right) =
  g'f' \left(\begin{smallmatrix} x \\ x \end{smallmatrix}\right)$, so
  $gf \not\leq g'f'$. 
\end{proof}

\section{The domain}\label{sec:pinj}

\begin{definition}
  A \emph{partial injection} is a partial function that is injective,
  wherever it is defined. More precisely, it(s graph) is a
  relation $R \subseteq X \times Y$ such that for each $x$ there is
  at most one $y$ with $(x,y) \in R$, and for each $y$ there is
  at most one $x$ with $(y,x) \in R$. Sets and partial injections
  form a category $\PInj$ under composition of relations $S \circ R =
  \{(x,z) \mid \exists y \colon (x,y) \in R, (y,z) \in S \}$.
\end{definition}

\begin{note}
\label{spans}
  Notationally, a partial injection $f \colon X \to Y$ can be
  conveniently represented as a span
  $(\xyline{X & F \ar@{ >->}|-{f_1}[l] \ar@{ >->}|-{f_2}[r] & Y})$
  of monics in $\Set$.
  Here, $f_1$ is (the inclusion of) the domain of definition of $f$,
  and $f_2$ is its (injective) action on that domain.
  Composition in this representation is by pullback.
  We will also write $\Dm(f)=f_1(F)$ for the domain of definition, and
  $\Im(f)=f_2(F)$ for the range of $f$.

  If it wasn't already, the span notation immediately makes it clear
  that $\PInj$ is a \emph{dagger} category: 
  $(\xyline{X & F \ar@{ >->}|-{f_1}[l] \ar@{ >->}|-{f_2}[r] & Y})^\dag
  = (\xyline{Y & F \ar@{ >->}|-{f_2}[l] \ar@{ >->}|-{f_1}[r] & X})$.
\end{note}

\begin{note}
  The category $\PInj$ has two dagger \emph{symmetric monoidal}
  structures. The first one, that we denote by $\tensor$, acts as the
  Cartesian product on objects. Because the Cartesian product of
  injections is again injective, $\tensor$ is well-defined on
  morphisms of $\PInj$ as well. The monoidal unit is a singleton set
  $\one$. Notice that $\tensor$ is not a product, and hence not a
  coproduct either. 

  The second dagger symmetric monoidal structure on $\PInj$, denoted
  by $\oplus$, is given by disjoint union on objects. It is easy to
  see that a disjoint union of injections is again injective, making
  $\oplus$ well-defined on morphisms of $\PInj$. The monoidal unit is
  the empty set. Notice that $\oplus$ is not a coproduct, and hence
  not a product either. 
\end{note}

\begin{lemma}
\label{limitsinpinj}
  The category $\PInj$:
  \begin{enumerate}
  \item[(i)] has (co)equalizers;
  \item[(ii)] has a zero object;
  \item[(iii)] does not have finite (co)products;
  \end{enumerate}
\end{lemma}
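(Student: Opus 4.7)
The plan is to treat the three parts largely independently, using the dagger of Note~\ref{spans} to halve the work in (i) and (iii).

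Part (ii) is immediate: for any set $X$, the only partial injection with empty domain is the empty one, and similarly whenever the codomain is empty. Hence $\emptyset$ is both initial and terminal, so a zero object.

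For part (i), I would construct equalizers directly and then take daggers to obtain coequalizers. Given parallel morphisms $f, g : X \to Y$, let $E \subseteq X$ consist of those $x$ at which $f$ and $g$ \emph{agree}, meaning either both are undefined at $x$, or both are defined with $f(x) = g(x)$. The total inclusion $E \hookrightarrow X$ is the candidate equalizer: a partial injection $h : A \to X$ satisfies $f \after h = g \after h$ exactly when $\Im(h) \subseteq E$, and in that case factors uniquely through $E$. Since the dagger is contravariant and involutive, $e$ equalizes $f, g$ if and only if $e^\dag$ coequalizes $f^\dag, g^\dag$, giving coequalizers for free.

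Part (iii) is the substantive one, and I would argue by a homset counting argument. Since a partial injection $A \to \one$ is determined by a subset of $A$ of size at most one, $|\mathrm{Hom}(A, \one)| = |A| + 1$. If a binary product $\one \times \one$ existed with underlying set $S$, the universal property would force $|\mathrm{Hom}(A, S)| = (|A|+1)^2$ for every $A$. Taking $|A| = 1$ pins down $|S| = 3$, whence $\mathrm{Hom}(\two, S) = \mathrm{Hom}(\two, \three)$ must have cardinality $9$. But counting partial injections $\two \to \three$ by the size of the domain of definition gives $1 + 2 \cdot 3 + 3 \cdot 2 = 13$, a contradiction. Since $\PInj$ is self-dual via the dagger, the absence of binary coproducts follows at once.

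The main obstacle is (iii): identifying test objects small enough to check by hand yet large enough to expose the contradiction. Once the calculation at $|A|=1$ determines $|S| = 3$, it suffices to find a single further $A$ where $|\mathrm{Hom}(A, \three)| \neq (|A|+1)^2$, and $A = \two$ is the smallest such.
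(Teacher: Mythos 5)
Your proof is correct, and parts (i) and (ii) coincide with the paper's: the same equalizer (the points where $f$ and $g$ agree, in the sense of being both undefined or both defined with equal value), coequalizers obtained by applying the dagger, and $\emptyset$ as the zero object. Where you genuinely diverge is (iii). The paper argues directly about the shape of a hypothetical coproduct: the coprojections would have to be total with disjoint images, so the candidate object contains the disjoint union, and then for arbitrary $f \colon X \to Z$ and $g \colon Y \to Z$ the mediating morphism fails to be unique when $f$ or $g$ is not total, and fails even to be a partial injection when their images overlap. You instead run a homset-counting argument against a binary product of singletons: since $|\PInj(A,\one)| = |A|+1$, a product $S = \one \times \one$ would force $|\PInj(A,S)| = (|A|+1)^2$; the case $A=\one$ pins down $|S|=3$, and then $A=\two$ yields $1 + 2\cdot 3 + 3\cdot 2 = 13 \neq 9$ partial injections $\two \to \three$, a contradiction, with coproducts ruled out by the dagger self-duality (the same duality the paper uses implicitly). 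Both arguments are sound; the paper's exhibits concretely how the universal property breaks for every pair $X, Y$ and which cotuples cause the trouble, whereas yours is shorter and purely numerical, establishing only the nonexistence of one specific (co)product of singletons---which is all the lemma requires.
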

\begin{proof}
  The equalizer of $f,g \colon X \to Y$ is the inclusion of
  \[
    \big\{ x \in X \mid x \not \in (\Dm(f) \cup \Dm(g))
    \vee \big(x \in (\Dm(f) \cap \Dm(g)) \wedge f(x)=g(x) \big) \big\} 
  \]
  into $X$. The empty set is a zero object in $\PInj$.

  Towards (iii), notice that if $(\xyline{X \ar|(.33){\kappa_X}[r] &
  X+Y & Y \ar|(.33){\kappa_Y}[l]})$ were a coproduct in $\PInj$, then
  one must have $\Dm(\kappa_X)=X$, $\Dm(\kappa_Y)=Y$ and
  $\Im(\kappa_X) \cap \Im(\kappa_Y) = \emptyset$, because otherwise
  unique existence of mediating morphisms is violated. Hence any
  coproduct must contain the disjoint union of $X$ and $Y$. 
  Let $f \colon X \to Z$ and $g \colon Y \to Z$ be
  any morphisms. Then a mediating morphism $m \colon X+Y \to Z$ has to
  satisfy $m(x)=f(x)$ for $x \in \Dm(f)$ and $m(y)=g(y)$ for $y \in
  \Dm(g)$. But such an $m$ is not unique, unless $\Dm(f)=X$ and
  $\Dm(g)=Y$. In fact, it is not even a partial injection unless
  $\Im(f) \cap \Im(g) = \emptyset$. We conclude that $\PInj$ does
  not have binary (co)products.
\end{proof}

\begin{note}
\label{infpaths}
  In fact, part (ii) of the previous lemma follows from the
  existence of directed colimits, which we now work towards. Recall
  that a category has directed colimits if and only if it has colimits
  of chains, \ie colimits of well-ordered
  diagrams~\cite[Corollary~1.7]{adamekrosicky:locallypresentable}.
  Observe that for a chain $D \colon I \to \PInj$,
  if $c_i \colon D(i) \to X$ is a cocone on $D$, 
  then $\Dm(c_i) \subseteq \Dm(D(i\leq j))$ for all $j \geq i$.
  To see this, notice that $c_i = c_j \after D(i \leq j)$ since $c_i$
  is a cocone, and therefore
  \[
      \Dm(c_i) 
    = \Dm(c_j \after D(i \leq j))
    \subseteq \Dm(D(i \leq j)).
  \]  
  This observation suggests that the colimit of a well-ordered diagram
  in $\PInj$ should consist of all `infinite paths'. The
  following proposition shows that this is indeed a colimit.
\end{note}

\begin{proposition}
\label{pinjdircolim}
  The category $\PInj$ has directed colimits.
\end{proposition}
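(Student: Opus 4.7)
The plan is to realize the directed colimit of a chain $D \colon I \to \PInj$ as the collection of ``infinite paths'' through the diagram, as suggested by Note~\ref{infpaths}. Since a category has directed colimits as soon as it has colimits of chains, it suffices to treat the case where $I$ is well-ordered.

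I would first construct the candidate colimit. Call $x \in D(i)$ \emph{persistent} if $x \in \Dm(D(i \leq j))$ for every $j \geq i$. On the disjoint union of the persistent elements, take the equivalence relation generated by $x \sim D(i \leq j)(x)$, and let $C$ be the quotient. Define $\kappa_i \colon D(i) \to C$ to have domain of definition the persistent elements of $D(i)$, and to send each such $x$ to its class $[x]$. Each $\kappa_i$ is a partial injection: if two persistent elements of $D(i)$ have equal classes, then some $D(i \leq k)$ must identify them, contradicting its own injectivity. The cocone identity $\kappa_j \after D(i \leq j) = \kappa_i$ holds on the nose, because persistence is preserved by the transitions and both sides have the same domain.

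Next, I would verify the universal property. Given a competing cocone $c_i \colon D(i) \to X$, Note~\ref{infpaths} already observes that $\Dm(c_i) \subseteq \Dm(D(i \leq j))$ for all $j \geq i$, so every element of $\Dm(c_i)$ is persistent. Define $m \colon C \to X$ by $m([x]) = c_i(x)$ whenever $[x]$ has some representative $x \in D(i)$ with $x \in \Dm(c_i)$; this is well-defined from the cocone equation $c_i = c_k \after D(i \leq k)$ together with the fact that any two representatives have a common image far enough along the chain. The map $m$ is injective: if $c_i(x) = c_j(y)$, choose $k \geq i,j$ and apply injectivity of $c_k$ to $D(i \leq k)(x)$ and $D(j \leq k)(y)$ to identify the two classes. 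The equality $m \after \kappa_i = c_i$ holds on the nose, because $x \in \Dm(c_i)$ iff $x$ is persistent and some $D(i \leq k)(x)$ lies in $\Dm(c_k)$. Uniqueness of $m$ is forced: on $[x]$ with a representative in some $\Dm(c_j)$ the value is dictated by $m \after \kappa_i = c_i$, while on $[x]$ with no such representative $m$ must be undefined, or else the cocone equation would fail at any persistent representative.

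The main obstacle I expect is bookkeeping around domains of definition. Partial injections require equality rather than inclusion of domains, so one must be careful that $\kappa_i$, the induced $m$, and the equation $m \after \kappa_i = c_i$ have precisely matching domains; and the identification of two representatives requires picking an index large enough to see them collide. None of these steps is deep, but the argument rests entirely on managing where each partial map is defined.
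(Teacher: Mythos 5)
Your proposal is correct and follows essentially the same route as the paper: reduce to well-ordered chains, build the colimit from persistent elements modulo the equivalence generated by $x \sim D(i \leq j)(x)$, and verify the cocone and universal property by tracking domains of definition exactly as in the paper's proof. The only differences are cosmetic — you make the partial-injectivity of the cocone maps and of the mediating morphism explicit, while compressing the domain computation that the paper spells out via well-orderedness.
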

\begin{proof}
  Let $D \colon I \to \PInj$ be a chain. Define
  \[
    X = \{ x \in \coprod_i D(i) \mid 
        \forall_{j\geq i}[x \in \Dm(D(i \leq j))]\} / \sim,
  \]
  where the coproduct is taken in $\Set$, and the equivalence
  relation $\sim$ is generated by $x \sim D(i \leq j)(x)$ for all $i
  \leq j$ in $I$ and $x \in \Dm(D(i \leq j))$. 
  For $i \in I$, define $c_i \colon D(i) \to X$ by
  \[
    \Dm(c_i) = \{ x \in D(i) \mid
        \forall_{j\geq i}[x \in \Dm(D(i \leq j))]\},
  \]
  and $c_i(x) = [x]$.

  First of all, let us show that the $c_i$ form a cocone. One has:
  \begin{align*}
    & \Dm(c_j \after D(i \leq j)) \\
    & = \{ x \in D(i) \mid 
        x \in \Dm(D(i \leq j)) \wedge D(i \leq j)(x) \in \Dm(c_j) \} \\
    & = \{ x \in D(i) \mid
        x \in \Dm(D(i \leq j)) \wedge \forall_{k\geq j}[ D(i \leq
        j)(x) \in \Dm(D(j \leq k)) ] \}.
  \end{align*}
  The well-orderedness of $I$ implies that
  \[
    \forall_{k \geq i}[P(k)] \Leftrightarrow \forall_{k \geq j}[P(k)]
    \wedge P(j)
  \]
  for any property $P$ on the objects of $I$, whence
  \[
      \Dm(c_j \after D(i \leq j))
    = \{ x \in D(i) \mid 
                \forall_{k \geq i}[ x \in \Dm(D(i\leq k)) ] \}
    = \Dm(c_i).
  \]
  Moreover $c_j \after D(i \leq j)(x) = [D(i \leq j)(x)] = [x] = c_i(x)$
  for $x \in \Dm(c_i)$, by definition of the equivalence relation.

  Next, we show that $c_i$ is universal. Let $d_i \colon D(i) \to Y$
  be any cocone, and define $m \colon X \to Y$ by
  \[
    \Dm(m) = \{ [x] \mid x \in \Dm(d_i) \}
  \]
  and $m([x]) = d_i(x)$ for $x \in D(i)$; this is well-defined since
  $d_i$ is a cocone. Then
  \begin{align*}
        \dom(m \after c_i)
    & = \{ x \in D(i) \mid x \in \Dm(c_i) \wedge m_i(x) \in \Dm(m) \} \\
    & = \{ x \in D(i) \mid \forall_{j \geq i}[ x \in \Dm(D(i \leq j))] 
        \wedge x \in \Dm(d_i) \} \\
    & = \Dm(d_i)
  \end{align*}
  by~\ref{infpaths}, 
  and $m \after c_i(x) = m([x]) = d_i(x)$ for $x \in D(i)$. Thus $m
  \after c_i = d_i$, so $m$ is indeed a mediating morphism. 

  Finally, if $m' \colon X \to Y$ satisfies $m \after c_i = d_i$, then
  it follows from the above considerations that $\Dm(m')=\Dm(m)$ and
  $m'(x)=m(x)$ for $x \in \Dm(m)$. Hence $m$ is the unique mediating
  morphism. 
\end{proof}

\begin{note}
\label{finpres}
  Recall that an object $X$ in a category $\cat{C}$ is called
  \emph{finitely presentable} when the hom-functor
  $\cat{C}(X,-) \colon \cat{C} \to \Set$ preserves directed colimits.
  Explicitly, this means that for any directed poset $D \colon I \to
  \cat{C}$, any colimit cocone $d_i \colon D(i) \to Y$ and any
  morphism $f \colon X \to Y$, there are $j \in I$ and a morphism $g
  \colon X \to D(j)$ such that $f = d_j \after g$. Moreover, this
  morphism $g$ is essentially unique, in the sense that if $f=d_i
  \after g = d_i \after g'$, then $D(i \to i') \after g = D(i \to i')
  \after g'$ for some $i' \in I$. 
  \[\vcenter{\hbox{\xymatrix{
          D(i) \ar[r] \ar_-{d_i}[dr] 
          & D(i') \ar[r] \ar_-{d_{i'}}[d] 
          & D(i'') \ar[r] \ar_-{d_{i''}}[dl]
          & \cdots \ar[r] 
          & D(j) \ar[r] \ar_-{d_j}[dlll]
          & \cdots \\
          & Y & & & X \ar^-{f}[lll] \ar@{-->}_-{g}[u]
        }}}
  \]
  A category is called \emph{finitely
  accessible}~\cite{adamekrosicky:locallypresentable} when it has
  directed colimits and every object is a directed colimit of finitely
  presentable objects. 
\end{note}

\begin{lemma}
  A set is finitely presentable in $\PInj$ if and only if it is
  finite.  
\end{lemma}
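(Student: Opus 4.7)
The plan is to prove both directions by exploiting the explicit description of directed colimits from Proposition~\ref{pinjdircolim}, which identifies elements of the colimit with equivalence classes of `eventually defined' representatives.

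For the ``if'' direction, suppose $X$ is finite, let $D \colon I \to \PInj$ be a directed diagram with colimit cocone $d_i \colon D(i) \to Y$, and let $f \colon X \to Y$ be a morphism. Transporting along the canonical isomorphism, we may assume $Y$ is the explicit colimit from Proposition~\ref{pinjdircolim}, so for each $x \in \Dm(f)$ we can write $f(x) = [z_x]$ with $z_x \in D(i_x)$ satisfying the infinite path condition. Because $\Dm(f)$ is finite, directedness of $I$ yields a single $j \in I$ with $j \geq i_x$ for all $x \in \Dm(f)$. I would then define $g \colon X \to D(j)$ by $\Dm(g) = \Dm(f)$ and $g(x) = D(i_x \leq j)(z_x)$; injectivity of $g$ follows from injectivity of $f$ via $d_j \after g = f$, which itself is straightforward from the cocone condition. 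Essential uniqueness of $g$ is similar: if $d_j \after g = d_j \after g'$ then the two sides agree in the colimit, so for each of the finitely many $x \in \Dm(g) = \Dm(g')$ there is some $j' \geq j$ with $D(j \leq j')(g(x)) = D(j \leq j')(g'(x))$, and directedness lets us take a common upper bound.

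For the ``only if'' direction, suppose $X$ is infinite. Let $I$ be the directed poset of finite subsets of $X$ ordered by inclusion, and let $D \colon I \to \PInj$ send $F$ to $F$ and $F \subseteq G$ to the (total) inclusion $F \hookrightarrow G$. Applying the construction of Proposition~\ref{pinjdircolim}, every inclusion is defined on all of its domain, so the infinite path condition is vacuous, and the equivalence relation identifies the various copies of each $x \in X$; the colimit is therefore $X$ with cocone given by the inclusions $c_F \colon F \hookrightarrow X$. If $X$ were finitely presentable, the identity $\id_X$ would factor as $c_F \after g$ for some finite $F$ and some $g \colon X \to F$, but then $\Im(\id_X) \subseteq \Im(c_F) = F$ is finite, a contradiction.

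The only mildly delicate step is verifying that the diagram of finite subsets really does have $X$ as its colimit in $\PInj$; this is where I would be most careful, since one must check that the ``infinite path'' condition in Proposition~\ref{pinjdircolim} simplifies correctly for a diagram of total inclusions. Everything else is bookkeeping.
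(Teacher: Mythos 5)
Your strategy is essentially the standard argument that the paper outsources to the citation \cite{adamekrosicky:locallypresentable}: the paper's own proof only records the observation about the domain of the factoring partial injection and refers to [1.2.1] of that book for the rest, whereas you verify both directions directly from the explicit colimit construction of Proposition~\ref{pinjdircolim}. Your ``only if'' direction is fine: the poset of finite subsets of $X$ is directed, $X$ with the inclusions is its colimit (this is exactly what the paper asserts inside Theorem~\ref{pinjlacc}), and a factorization of $\id[X]$ through a finite stage is impossible for cardinality reasons. So the route is the same in spirit, just self-contained rather than citation-based.

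Two steps need repair, though. First, in the essential-uniqueness part you assert $\Dm(g)=\Dm(g')$, and this does not follow from $d_j \after g = d_j \after g'$: an element $x$ may lie in $\Dm(g)$ with $g(x) \notin \Dm(d_j)$ (its image dies at some later stage of the diagram), in which case $x \notin \Dm(f)$ and $x$ need not lie in $\Dm(g')$ at all. The correct argument runs over the finite set $\Dm(g) \cup \Dm(g')$: for $x$ whose images under $g$ and $g'$ both survive, equality of their classes in the colimit yields a stage at which they become equal; for $x$ whose image under $g$ (or $g'$) does not survive, there is a stage at which that image becomes undefined, and ``agreement'' at that $x$ means both composites $D(j \leq j') \after g$ and $D(j \leq j') \after g'$ are undefined there. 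Finiteness of $X$ and directedness then give a single $j'$ working for all $x$, which is what essential uniqueness requires. Second, Proposition~\ref{pinjdircolim} is proved for chains, while your ``only if'' diagram of finite subsets is directed but not a chain; you must either check that the same formula (with directedness replacing well-order in the key step) computes colimits over arbitrary directed posets, or verify the universal property of the cocone of inclusions $F \hookrightarrow X$ by hand --- you flag this yourself, and it is indeed routine, but it should be done rather than assumed.
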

\begin{proof}
  The only thing, in the situation of~\ref{finpres} with $X$ finite,
  is to notice that if a partial injection $g$ is to exist, we must
  have $\Dm(g)=\Dm(f)$. The rest follows
  from~\cite[1.2.1]{adamekrosicky:locallypresentable}. 
\end{proof}

\begin{theorem}
\label{pinjlacc}
  The category $\PInj$ is finitely accessible.
\end{theorem}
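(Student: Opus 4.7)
The plan is to combine the two facts established immediately before the theorem: $\PInj$ has directed colimits (Proposition~\ref{pinjdircolim}), and the finitely presentable objects in $\PInj$ are exactly the finite sets. What remains is to exhibit every set $X$ as a directed colimit of finite sets, indexed by a small diagram.

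The natural candidate diagram is $D \colon \mathcal{P}_{\mathrm{fin}}(X) \to \PInj$, where $\mathcal{P}_{\mathrm{fin}}(X)$ is the directed poset of finite subsets of $X$ ordered by inclusion, $D(A) = A$, and $D(A \subseteq B)$ is the total injection given by the inclusion (which is a perfectly well-defined morphism in $\PInj$, just happening to be everywhere defined and monic). The evident cocone consists of the inclusion partial injections $c_A \colon A \to X$. I would then verify that this is a colimit by appealing to the explicit construction in Proposition~\ref{pinjdircolim}. Because each $D(A \subseteq B)$ has domain of definition all of $A$, the universal quantifier $\forall_{B \supseteq A}[x \in \Dm(D(A \subseteq B))]$ is trivially satisfied, so the colimiting object produced there is $\coprod_{A} A / {\sim}$ with $\sim$ identifying each element with its image under an inclusion; this canonically matches $X$ itself, and the cocone constructed there corresponds to the inclusions $c_A$.

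Alternatively, and perhaps more slickly, I would verify the universal property directly. Given any cocone $d_A \colon A \to Y$, define $m \colon X \to Y$ by $\Dm(m) = \{ x \in X \mid x \in \Dm(d_{\{x\}}) \}$ and $m(x) = d_{\{x\}}(x)$. That $m$ is a partial injection follows because if $m(x)=m(x')$ then, taking $A = \{x, x'\}$, compatibility of the cocone with the inclusions $\{x\} \hookrightarrow A$ and $\{x'\} \hookrightarrow A$ forces $d_A(x) = d_A(x')$, and the injectivity of $d_A$ gives $x = x'$. Cocone compatibility $m \circ c_A = d_A$ and essential uniqueness of $m$ are then straightforward by a similar one-element argument.

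There is no real obstacle: the argument is a routine fleshing out of the slogan ``every set is the directed union of its finite subsets''. The only point that requires mild care is the treatment of partiality, namely checking that the domain of definition of the mediating morphism $m$ matches the union of the domains of the $d_A$ and that injectivity is preserved; both reduce to the one- and two-element cases by the compatibility of the cocone with inclusions of finite subsets.
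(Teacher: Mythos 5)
Your proposal is correct and follows exactly the paper's route: invoke Proposition~\ref{pinjdircolim} and the lemma identifying finitely presentable objects with finite sets, and then exhibit each set $X$ as the colimit of the directed diagram of its finite subsets. The paper simply asserts this last point as ``easy''; your direct verification of the universal property (reducing injectivity and uniqueness to one- and two-element subsets) is a sound fleshing out of the same argument.
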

\begin{proof}
  It suffices to prove that every set in $\PInj$ is a directed colimit
  of finite ones. But that is easy: $X$ is the colimit of the directed
  diagram consisting of its finite subsets.
\end{proof}

\begin{definition}
  An \emph{inverse category} is a category $\cat{C}$ in which every
  morphism $f \colon X \to Y$ allows a unique morphism $f^\dag \colon Y
  \to X$ satisfying $f=ff^\dag f$ and $f^\dag=f^\dag f f^\dag$.
  Equivalently, it is a dagger category satisfying
  $f=ff^\dag f$ and $pq=qp$ for idempotents $p,q \colon X
  \to X$.
  The proof of equivalence of these two statements is the same as for
  inverse semigroups
  (see~\cite[Theorem~1.1.3]{lawson:inversesemigroups} or~\cite[Theorem~2.20]{cockettlack:restriction1}).
  Inverse categories are a special case of \emph{restriction
  categories}~\cite{cockettlack:restriction1}. 

  The category $\PInj$ is an inverse category under its dagger
  (see~\ref{spans}). The following 
  categorification of the Wagner--Preston 
  theorem~\cite[Theorem~1.5.1]{lawson:inversesemigroups} 
  shows that it is in fact a representative one. See
  also~\cite[3.4]{cockettlack:restriction1}. 
\end{definition}  

\begin{proposition} \cite{kastl:inverse}
  Any locally small inverse category $\cat{C}$ allows
  a faithful embedding $F \colon \cat{C} \to \PInj$ that preserves daggers.
\end{proposition}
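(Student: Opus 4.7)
The plan is to imitate the classical Wagner--Preston embedding, in which an inverse semigroup $S$ acts on the set $S$ itself by partial left multiplication. For a locally small inverse category $\cat{C}$, the analogous construction takes $F(X) = \coprod_{Z} \cat{C}(Z,X)$, the disjoint union of all morphisms of $\cat{C}$ with codomain $X$; should this collection be too large set-theoretically for some pair of objects, one can pass to a small full inverse subcategory containing whatever morphisms need to be separated. On a morphism $f \colon X \to Y$ of $\cat{C}$, I would define the partial injection $F(f) \colon F(X) \to F(Y)$ by the rule $g \mapsto f \after g$, with domain of definition $\{ g \in F(X) \mid f^\dag f \after g = g \}$, so that $F(f)$ is multiplication by $f$ restricted to the subset on which this operation is injective.

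First I would check that $F(f)$ really is a partial injection: on its prescribed domain the assignment is injective because $g = f^\dag f g = f^\dag(f g)$ recovers $g$ from its image. Next I would verify functoriality. Identity preservation is immediate from $\id^\dag \id = \id$. For composition, with $f \colon X \to Y$ and $g \colon Y \to Z$, the task is to prove that an element $h$ lies in $\Dm(F(gf)) = \{h \mid f^\dag g^\dag g f h = h\}$ exactly when it lies in $\Dm(F(g) \after F(f)) = \{h \mid f^\dag f h = h \text{ and } g^\dag g f h = f h\}$; this is precisely where the commutativity of the two idempotents $f f^\dag$ and $g^\dag g$ on $Y$ (guaranteed by the inverse-category axiom) becomes essential, and where the main obstacle of the proof resides. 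Dagger preservation is then a short calculation: $\Im(F(f)) = \{h \mid f f^\dag h = h\}$ coincides with $\Dm(F(f^\dag))$, and $F(f^\dag)$ indeed inverts the action of $F(f)$ on this common subset.

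Finally, for faithfulness, suppose $F(f) = F(f')$ for $f, f' \colon X \to Y$. Evaluating both partial injections at the domain-idempotent $f^\dag f \in \cat{C}(X,X)$, which lies in $\Dm(F(f))$ by idempotence, I would deduce that it lies in $\Dm(F(f'))$ as well, yielding $f'^\dag f' \after f^\dag f = f^\dag f$; by symmetry $f^\dag f \after f'^\dag f' = f'^\dag f'$, and the commutativity of idempotents collapses these two equations to $f^\dag f = f'^\dag f'$. Then $f = f(f^\dag f) = F(f)(f^\dag f) = F(f')(f^\dag f) = f'(f'^\dag f') = f'$, closing the argument. Apart from the composition check, all the remaining steps are essentially bookkeeping inside the commuting-idempotent calculus of inverse categories.
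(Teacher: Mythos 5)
Your core construction is the paper's own, in mirror image: the paper sets $F(X)=\coprod_{Z}\cat{C}(X,Z)$ and lets $F(f)$ act by $(\blank)\after f^\dag$ on the domain $\{g \mid g=gf^\dag f\}$, whereas you act by postcomposition with $f$ on morphisms \emph{into} $X$; the two versions are interchangeable via the dagger, and all your local verifications are sound. The composition step that you flag as the main obstacle but do not carry out does indeed go through, by exactly the commuting-idempotents argument you anticipate: if $h=f^\dag g^\dag gfh$, then $f^\dag fh=f^\dag(ff^\dag)(g^\dag g)fh=f^\dag(g^\dag g)(ff^\dag)fh=h$ and $fh=(ff^\dag)(g^\dag g)fh=(g^\dag g)(ff^\dag)fh=g^\dag gfh$, giving $\Dm(F(gf))=\Dm(F(g)\after F(f))$. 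Your faithfulness argument, evaluating at the idempotent $f^\dag f$, is a clean variant of the paper's, which evaluates $F(f)=F(f')$ at $f$ and $f'$ themselves and then invokes uniqueness of generalized inverses. (You never address injectivity on objects, which the word ``embedding'' asks for, but for your $F$ that is immediate once the coproduct is read as a tagged disjoint union.)

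The genuine gap is the locally small case, which is precisely what the statement asserts. When $\cat{C}$ has a proper class of objects, $F(X)=\coprod_{Z}\cat{C}(Z,X)$ is in general a proper class, so $F$ does not take values in $\PInj$ at all. Your proposed remedy---passing to a small full inverse subcategory ``containing whatever morphisms need to be separated''---only yields, for each pair of parallel morphisms, a dagger functor on \emph{some} small subcategory that distinguishes them; it does not produce a single functor defined on all of $\cat{C}$, and faithfulness is a property of one functor, not of a jointly separating family of functors on varying subcategories. The paper devotes the second half of its proof to exactly this point: it exhibits $\cat{C}$ as the colimit in $\cat{Cat}$ of its small inverse subcategories, applies the small case to each of them, and deduces an embedding of $\cat{C}$ itself. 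To prove the stated proposition you need an argument of this kind (or some other device that converts the small-case construction into one global faithful dagger functor); as written, your proposal establishes the result only for small inverse categories.
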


\begin{note}
\label{domainspinj}
  Like any inverse category, the homsets of $\PInj$ carry a natural
  partial order: $f \leq g$ when $f=gf^\dag f$. Concretely, $f \leq g$
  means $\Dm(f) \subseteq \Dm(g)$ and $f(x)=g(x)$ for $x \in \Dm(f)$.  
  It is easy to see that this makes homsets into directed-complete
  partially ordered sets, with $\Dm(\bigvee_i f_i) = \bigcup_i
  \Dm(f_i)$ for a directed family of morphisms $f_i \colon X \to Y$.
  In fact, as in Proposition~\ref{domainshilb}, homsets are algebraic domains:
  any partial injection is the supremum of compact ones below it,
  which are those partial injections with finite domain. Moreover,
  composition respects these operations. Thus $\PInj$ is enriched in
  algebraic domains. This is a satisfying reflection of
  Theorem~\ref{pinjlacc} on the level of homsets.
\end{note}

\section{The functor}\label{sec:ltwo}

\begin{definition}
\label{def:ltwo}
  There is a functor $\ltwo \colon \PInj \to \Hilb$, acting on a set
  $X$ as 
  \[
    \ltwo(X) = \{ \varphi \colon X \to \field{C} \mid \sum_{x \in X}
    |\varphi(x)|^2 < \infty \},
  \]
  which is a well-defined Hilbert space under the inner product
  $\inprod{\varphi}{\psi} = \sum_{x \in X} \overline{\varphi(x)}
  \psi(x)$. The action on morphisms sends a partial injection 
  $(\xyline{X & F \ar@{ >->}|-{f_1}[l] \ar@{ >->}|-{f_2}[r] & Y})$
  to the linear function $\ltwo f \colon \ltwo(X) \to \ltwo(Y)$
  determined informally by $\ltwo f = (\blank) \after
  f^\dag$. Explicitly, 
  \[
    (\ltwo f)(\varphi)(y) = \sum_{x \in f_2^{-1}(y)} \varphi(f_1(x)).
  \]
\end{definition}

\begin{note}
  In verifying that $\ltwo f$ is indeed a well-defined morphism of
  $\Hilb$, it is essential that $f$ is a (partial) injection. 
  \begin{align*}
        \sum_{y \in Y} \big|(\ltwo f)(\varphi)(y)\big|^2
    & = \sum_{y \in Y} \big| \sum_{x \in f_2^{-1}(y)} \varphi(f_1(x)) \big|^2 
      \leq \sum_{y \in Y} \sum_{x \in f_2^{-1}(y)} |\varphi(f_1(x))|^2 \\
    & = \sum_{x \in F} |\varphi(f_1(x))|^2 
      \leq \sum_{x \in X} |\varphi(x)|^2 
      < \infty.
  \end{align*}
  That this breaks down for functions $f$ in general, instead of
  (partial) injections, was first noticed
  in~\cite{barr:algebraicallycompact}, and further studied in~\cite{haghverdiscott:goi}. 
  That is, $\ltwo$ is well-defined on the category of sets and partial injections; on the
  category of finite sets and functions; but not on the category of sets and functions; nor
  on the category of finite sets and relations.
  Functoriality of $\ltwo$ is easy to verify.
\end{note}

\begin{note}
  The following calculation shows that the $\ltwo$ functor preserves
  daggers. For a partial injection
  $(\xyline{X & F \ar@{ >->}|-{f_1}[l] \ar@{ >->}|-{f_2}[r] & Y})$,
  $\varphi \in \ltwo(X)$ and $\psi \in \ltwo(Y)$: 
  \begin{align*}
        \inprod{(\ltwo f)(\varphi)}{\psi}_{\ltwo(Y)}
    & = \sum_{y \in Y} \overline{(\ltwo f)(\varphi)(y)} \cdot \psi(y) 
      = \sum_{y \in Y} \sum_{x \in f_2^{-1}(y)} \overline{\varphi(f_1(x))}
                      \cdot \psi(y) \\
    & = \sum_{x \in F} \overline{\varphi(f_1(x))} \cdot \psi(f_2(x)) 
      = \sum_{x \in X} \sum_{x' \in f_1^{-1}(x)} \overline{\varphi(x)}
                      \cdot \psi(f_2(x')) \\
    & = \sum_{x \in X} \overline{\varphi(x)} \cdot 
                      (\sum_{x' \in f_1^{-1}(x)} \psi(f_2(x')))
      = \inprod{\varphi}{\ltwo(f^\dag)(\psi)}_{\ltwo(X)}.
  \end{align*}
\end{note}

\begin{note}
  The functor $\ltwo$ preserves the tensor product $\tensor$, \ie it
  is symmetric (strong) monoidal. There is a canonical isomorphism
  $\field{C} \cong \ltwo(\one)$. The required natural
  morphisms $\ltwo(X) \tensor \ltwo(Y) \to \ltwo(X \tensor Y)$ are
  given by mapping $(\varphi,\psi)$ to the function $(x,y) \mapsto
  \varphi(x)\psi(y)$. That there are inverses is seen when one
  realizes that $\ltwo(X \tensor Y)$ is the Cauchy-completion of the
  set of functions $X \times Y \to \field{C}$ with finite support. The
  required coherence diagrams follow easily.
\end{note}

\begin{note}
  Also, the $\ltwo$ functor is symmetric (strong) monoidal with
  respect to $\oplus$. There is a canonical isomorphism between the
  0-dimensional Hilbert space and the set $\ltwo(\emptyset)$
  consisting only of the empty function. The natural morphisms
  $\ltwo(X) \oplus \ltwo(Y) \to \ltwo(X \oplus Y)$ map
  $(\varphi,\psi)$ to the cotuple $[\varphi,\psi] \colon X \oplus Y
  \to \field{C}$. One sees that these are isomorphisms by recalling
  that $\ltwo(X \oplus Y)$ is the closure of the span of the Kronecker
  functions $\delta_x$ and $\delta_y$ for $x \in X$ and $y \in Y$, on
  which the inverse acts as the appropriate coprojection. Coherence
  properties readily follow.
\end{note}

\begin{note}
  From the description of the structure of homsets in $\PInj$ and
  $\Hilb$ as algebraic domains in~\ref{domainspinj}
  and~\ref{enrichmenthilb}, respectively, it is clear that the functor
  $\ltwo$ preserves this enrichment:  $\ltwo(\bigvee_i f_i) = \bigvee_i \ltwo f_i$ if $f_i
  \colon X \to Y$ is a directed family of morphisms in $\PInj$.  
  See also~\cite[Theorem~13]{hines:oracle}.
\end{note}

\begin{note}
\label{preserveequalizers}
  The functor $\ltwo$ preserves (co)kernels and finite (co)products
  (because $\PInj$ has very few of the latter).
  But it follows from Lemma~\ref{limitsinhilb}(iii) and
  Proposition~\ref{pinjdircolim} that 
  $\ltwo$ cannot preserve arbitrary (co)limits. For an
  explicit counterexample to preservation of equalizers, take
  $X=\{0,1\}$, $Y=\{a\}$, and let $f,g\colon X 
  \to Y$ be the partial injections $f=\{(0,a)\}$ and
  $g=\{(1,a)\}$. Their equaliser in $\Cat{PInj}$ is $\emptyset$. But
  \begin{align*}
       \mathrm{eq}(\ell^2(f), \ell^2(g)) 
    &= \{ \varphi \in \ell^2(X) \mid \ell^2(f)(\varphi) =
                                     \ell^2(g)(\varphi) \} \\
    &= \Big\{ \varphi \in \ell^2(X) \mid \forall_{y \in Y}.\,
                      \sum_{u \in f_2^{-1}(y)} \varphi(f_1(u)) =
                      \sum_{v \in g_2^{-1}(y)} \varphi(g_1(v)) \Big\} \\
    &= \{ \varphi \colon  \{0,1\} \to \field{C} \mid \varphi(0) = \varphi(1) \} 
    \cong \field{C}.
  \end{align*}
  Hence $\mathrm{eq}(\ell^2(f),\ell^2(g)) \cong \field{C} \not\cong
  \{\emptyset\} = \ell^2(\mathrm{eq}(f,g))$. 
\end{note}

\begin{corollary}
\label{noadjoints}
  The functor $\ltwo \colon \PInj \to \Hilb$ has no adjoints. 
\end{corollary}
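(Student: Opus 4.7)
The plan is to invoke the standard consequences of adjointness: a functor with a left adjoint preserves all limits, and one with a right adjoint preserves all colimits. It therefore suffices to exhibit a single limit and a single colimit that $\ell^2$ fails to preserve, and both witnesses are essentially already on the table.

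Ruling out a left adjoint is immediate from Note~\ref{preserveequalizers}: the equalizer in $\PInj$ of the parallel pair $f=\{(0,a)\}$, $g=\{(1,a)\}$ is $\emptyset$, whereas the equalizer of $\ell^2 f$ and $\ell^2 g$ in $\Hilb$ is one-dimensional. If $\ell^2$ had a left adjoint it would itself be a right adjoint, hence preserve this equalizer; contradiction.

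Ruling out a right adjoint I would handle by exhibiting a directed colimit in $\PInj$ that $\ell^2$ does not preserve. The natural witness is $\field{N}$, which by Proposition~\ref{pinjdircolim} is the directed colimit of its finite initial segments $X_n=\{0,\ldots,n-1\}$ under inclusion. Applying $\ell^2$ yields the chain of canonical isometric inclusions $\field{C}^n \hookrightarrow \field{C}^{n+1}$, with candidate colimit $\ell^2(\field{N})$. To refute universality I would adapt the argument of Lemma~\ref{limitsinhilb}(ii) and produce a cocone into $\field{C}$ whose components have unbounded norms, for instance $f_n(x_0,\ldots,x_{n-1}) = \sum_{k<n} k\, x_k$; since each $f_n$ is a linear functional of norm $\sqrt{\sum_{k<n} k^2}$, no bounded linear map $\ell^2(\field{N}) \to \field{C}$ can factor all of them. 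The main point requiring care, mild but genuine, is that unlike the coproduct case of~\ref{limitsinhilb}(ii), where the individual $f_n$ could be chosen independently, here they must be mutually compatible along the inclusions $\field{C}^n \hookrightarrow \field{C}^{n+1}$; this constraint dictates a specific, nontrivial choice of cocone rather than allowing arbitrary blow-up of norms, and once it is verified the contradiction is immediate.
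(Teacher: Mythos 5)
Your argument is correct, but it splits from the paper halfway through. For ruling out a left adjoint you do exactly what the paper does: cite the equalizer counterexample of~\ref{preserveequalizers}. For ruling out a right adjoint the paper's proof is the single line ``it would preserve (co)limits, contradicting~\ref{preserveequalizers}'', where the colimit half of that note is justified by combining Lemma~\ref{limitsinhilb}(iii) with Proposition~\ref{pinjdircolim} (alternatively, since $\ltwo$ preserves daggers, the equalizer counterexample dualizes at once to a coequalizer one, and coequalizers are colimits). You instead manufacture an explicit directed colimit that $\ltwo$ fails to preserve: $\field{N}$ as the colimit in $\PInj$ of its finite initial segments, whose image is the chain of isometric inclusions $\field{C}^n \to \ltwo(\field{N})$, against which the compatible cocone $f_n(x_0,\ldots,x_{n-1})=\sum_{k<n} k\,x_k$, of norm $\big(\sum_{k<n}k^2\big)^{1/2}\to\infty$, admits no bounded mediating map because $\|f_n\| = \|f \after \iota_n\| \leq \|f\|$ for isometric $\iota_n$. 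The compatibility along the inclusions, which you rightly single out as the one point needing care, does hold for your choice, since the new coordinate is evaluated at $0$ under the inclusion. What your route buys is a self-contained witness that directed colimits are not preserved: strictly speaking, the mere non-existence of directed colimits in $\Hilb$ does not by itself show that this particular image cocone fails to be colimiting, so your counterexample actually substantiates the claim of~\ref{preserveequalizers} rather than just quoting it. What the paper's route buys is brevity: one counterexample plus the dagger kills both adjoints simultaneously.
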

\begin{proof}
  If $\ltwo$ had an adjoint, it would preserve (co)limits,
  contradicting~\ref{preserveequalizers}.
\end{proof}

\begin{note}
\label{eso}
  The functor $\ltwo$ is clearly faithful. It is also essentially
  surjective on objects: every Hilbert space $H$ has an orthonormal
  basis $X$, so $H \cong \ltwo(X)$. 
  It cannot be full because of~\ref{noadjoints}, but it does reflect
  isomorphisms: if $\ltwo f$ is invertible, so is $f$. 
\end{note}

\begin{note}
\label{orthobasis}
  If $X$ is a set, $\ell^2(X)$ is not just a Hilbert space; it
  comes equipped with a chosen orthonormal basis (given by the
  Kronecker functions $\delta_x \in \ell^2(X)$ for $x \in X$).
  Hence we could think of $\ltwo$ as a functor to a category of
  Hilbert spaces $H$ with a priviledged orthonormal basis $X \subseteq
  H$. If we choose as morphisms $(H,X) \to (K,Y)$ those continuous
  linear $f \colon H \to K$ satisfying $f(X) \subseteq Y$ and
  $f f^\dag f = f$, then the functor $\ltwo$ in fact becomes
  (half of) an equivalence of categories~\cite[4.3]{abramskyheunen:hstar}.
\end{note}

\begin{note}
  Lemma~\ref{noadjoints} showed that $\ltwo(X)$ is not the free
  Hilbert space on $X$, at least not in the categorically accepted
  meaning. It also makes precise the intuition
  that `choosing bases is unnatural': the
  functor $\ltwo \colon \PInj \to \Hilb$ cannot have a 
  (functorial) converse, even though one can choose an orthonormal
  basis for every Hilbert space.  

  It is perhaps also worth mentioning that $\ltwo$ is not a fibration
  in the technical sense of the word, not even a nonsplit or noncloven
  one, as the reader might perhaps think; Cartesian liftings in
  general do not exist because `choosing bases is unnatural'. 
\end{note}

\section{The image}\label{sec:image}

\begin{note}
  The choice of morphisms in~\ref{orthobasis} is quite strong, and
  does not capture all morphisms of interest to quantum physics. From
  that point of view, one would at least like to relax to \emph{partial 
  isometries}: morphisms $i$ of Hilbert spaces that
  satisfy $i i^\dag i = i$. Equivalently, the restriction of $i$ to the 
  orthogonal complement of its kernel is an isometry. The following
  proposition proves that, up to isomorphisms, the direct image of the
  functor $\ltwo$ consists precisely of partial isometries.
\end{note}

\begin{definition}
  For a category $\cat{C}$, denote by $\cat{C}_{\cong}$ the category
  with the same objects as $\cat{C}$ whose morphisms are the
  bimorphisms of $\cat{C}$.

  The category $\Hilbi$ carries two dagger symmetric monoidal structures:
  $\oplus$ and $\otimes$. Because having (co)limits only depends on a
  skeleton of the specifying diagram, $\Hilbi$ does not have (co)equalizers,
  nor (finite) (co)products, but does have directed (co)limits.
\end{definition}

\begin{proposition}
\label{imageltwo}
  A morphism in $\Hilb$ is a partial isometry if
  and only if it is of the form $v \after \ltwo f \after u$ for morphisms $f$ in
  $\PInj$ and unitaries $u,v$ in $\Hilbi$.  
\end{proposition}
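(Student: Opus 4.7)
My plan is to prove the two implications separately. The ``if'' direction follows from the fact that $\ltwo(f)$ is always a partial isometry and that partial isometries are closed under pre- and post-composition with unitaries, while the ``only if'' direction requires choosing compatible orthonormal bases on both sides of the given partial isometry.

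For the easy direction, I would first note that since $\PInj$ is an inverse category, the identity $ff^\dag f = f$ holds for every partial injection $f$; applying the dagger-preserving functor $\ltwo$ yields $\ltwo(f) \after \ltwo(f)^\dag \after \ltwo(f) = \ltwo(f)$, so $\ltwo(f)$ is a partial isometry. A short calculation then shows that sandwiching any partial isometry $i$ between unitaries $v$ and $u$ preserves this identity, because the factors $u u^\dag$ and $v^\dag v$ appearing in the middle of $(vi u)(vi u)^\dag (vi u)$ collapse to identities.

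For the converse, let $h \colon H \to K$ be a partial isometry. I would first check that $h^\dag h$ and $h h^\dag$ are self-adjoint idempotents, hence orthogonal projections onto $\ker(h)^\perp$ and $\Im(h)$ respectively; in particular $\Im(h)$ is already closed, since any $y$ in its closure is fixed by the projection $h h^\dag$ and is therefore of the form $h(h^\dag y) \in \Im(h)$. The restriction $h_0 \colon \ker(h)^\perp \to \Im(h)$ is thus a unitary between the two subspaces. I would then choose an orthonormal basis $X_0$ of $\ker(h)^\perp$ and extend it to an orthonormal basis $X$ of $H$; the set $\{h(x) \mid x \in X_0\}$ is then an orthonormal basis of $\Im(h)$, which I extend to an orthonormal basis $Y$ of $K$. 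Next I define a partial injection $f \colon X \to Y$ with $\Dm(f) = X_0$ by $f(x) = h(x)$, together with unitaries $u \colon H \to \ltwo(X)$ and $v \colon \ltwo(Y) \to K$ determined by $u(x) = \delta_x$ and $v(\delta_y) = y$, as in~\ref{eso}. Verifying that $v \after \ltwo(f) \after u$ agrees with $h$ on the basis $X$ is then a short check: basis vectors in $X_0$ trace to $h(x)$, while those in $X \setminus X_0$ lie in $\ker(h)$ and are sent to $0$ on both sides.

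The step that needs the most care is the structural analysis of $h$: showing that $\Im(h)$ is closed and that $h_0$ is a unitary onto it, both of which rely essentially on the partial-isometry relation $h h^\dag h = h$. Once that is in place, the construction of $f$, $u$, and $v$ is bookkeeping of Kronecker bases, and the equation $h = v \after \ltwo(f) \after u$ reduces to a verification on orthonormal basis elements followed by continuous linear extension.
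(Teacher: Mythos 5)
Your proposal is correct and follows essentially the same route as the paper: choose an orthonormal basis of the initial space $\ker(h)^\perp$, push it forward along the partial isometry to obtain an orthonormal basis of the final space, extend both to bases of $H$ and $K$ (the paper does this by separately choosing bases of $\ker(h)$ and $\ker(h^\dag)$, which amounts to the same thing), and read off the partial injection $f$ and the unitaries $u,v$. Your extra verifications --- that $\ltwo$ of a partial injection is a partial isometry via $ff^\dag f=f$, that $\Im(h)$ is closed, and the check on Kronecker basis vectors --- merely spell out what the paper leaves implicit.
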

\begin{proof}
  Clearly a map of the form $v \after \ltwo f \after u$ is a partial
  isometry. Conversely, suppose that $i \colon H \to K$ is a partial
  isometry. Choose an orthonormal basis $X \subseteq H$ for its
  initial space $\ker(i)^\perp$, and choose an orthonormal basis $X'
  \subseteq H$ for $\ker(i)$, giving a unitary $u \colon H \to \ltwo(X \oplus X')$.
  Let $Y=i(X) \subseteq K$. Then $Y$ will be an orthonormal basis for
  the final space $\ker(i^\dag)^\perp$ because $i$ acts isometrically
  on $X$. Choose an orthonormal basis $Y' \subseteq K$ for
  $\ker(i^\dag)$, giving a unitary $v \colon \ltwo(Y \oplus Y') \to
  K$. Now, if we define $f = (\xyline{X \oplus X' & X \ar@{ >->}[l] 
  \ar@{ >->}|-{i}[r] & Y \oplus Y'})$, then $i=v \after \ltwo f \after u$.
\end{proof}

\begin{note}
\label{compositionpartialisometries}
  However, partial isometries are not closed under composition. To see
  this, consider the partial isometries $\left(\begin{smallmatrix} 1
  \\ 0 \end{smallmatrix}\right) \colon \field{C} \to \field{C}^2$
  and $\left(\begin{smallmatrix} \sin(\theta) &
  \cos(\theta) \end{smallmatrix}\right) \colon \field{C}^2 \to
  \field{C}$ for a fixed real number $\theta$. Their composition is
  $\left(\begin{smallmatrix} \sin(\theta) \end{smallmatrix}\right)
  \colon \field{C} \to \field{C}$, which is not a partial isometry unless
  $\theta$ is a multiple of $\pi/2$. 
  There are other compositions
  that do make partial isometries into a
  category~\cite{hinesbraunstein:partialisometries}, but these are not
  of interest here. Instead, we shall extend the previous proposition to
  highlight one of the most remarkable features of the functor
  $\ltwo$.
\end{note}

\begin{note}
\label{finiterank}
  The example in~\ref{compositionpartialisometries} shows that any
  linear function $\field{C} \to \field{C}$ between -1 and 1 is a
  composition of partial isometries. Note that the
  projections $\pi_i \colon \field{C}^m \to \field{C}$ and
  coprojections $\pi_i^\dag \colon \field{C} \to \field{C}^n$ are
  partial isometries, as are the weighted diagonal $\Delta / \sqrt{n} \colon \field{C} \to
  \field{C}^n$ given by $\Delta(x) = (x,\ldots,x)$ and the
  weighted codiagonal $\Delta^\dag  / \sqrt{m} \colon \field{C}^m \to \field{C}$ given by
  $\Delta^\dag(x_1,\ldots,x_m) = \sum_i x_i$. Moreover, it is easy to
  see that if $f$ and $g$ are (compositions of) partial isometries,
  then so is $f \oplus g$. Finally, any linear map $f \colon
  \field{C}^m \to \field{C}^n$ has a matrix expansion, and can hence
  be written in terms of biproduct structure as $f = \Delta^\dag
  \after (\bigoplus_{i=1}^m\bigoplus_{j=1}^n \pi_j^\dag \after \pi_j
  \after f \after \pi_i^\dag \after \pi_i) \after \Delta$. Thus any
  $f \colon \field{C}^m \to \field{C}^n$ with $\|f\| \leq 1/\sqrt{mn}$
  is a composition of partial isometries. 
\end{note}

\begin{note}
  The \emph{essential image} of a functor $F \colon
  \cat{C} \to \cat{D}$ is the smallest subcategory of $\cat{D}$
  that contains all morphisms $F(f)$
  for $f$ in $\cat{C}$, and that is closed under composition with
  bimorphisms of $\cat{D}$.

  It follows from~\ref{finiterank} that the essential
  image of the functor $\ltwo$ contains at least all morphisms of
  $\Hilb$ of finite rank. For infinite rank that strategy fails
  because $\Delta$ is then no longer a valid morphism
  (see~\ref{hilbconditionallycomplete}). 
  Nevertheless, Theorem~\ref{ltwoisessentiallyfull} below will prove
  that the essential image of $\ltwo$ is all of $\Hilb$.
  In preparation we accommodate an intermezzo on \emph{polar
  decomposition}. 

  A morphism $p \colon H \to H$ in $\Hilb$ is
  \emph{nonnegative} when $\inprod{px}{x} \geq 0$ for all $x \in H$, and
  \emph{positive} when $\inprod{px}{x}>0$. 
  Nonnegative maps are precisely those of the
  form $p=f^\dag f$ for some morphism $f$.
\end{note}

\begin{proposition}
\label{polar}
  For every morphism $f \colon H \to K$ between Hilbert spaces, there
  exist a unique nonnegative map $p \colon H \to H$ and partial isometry
  $i \colon H \to K$ satisfying $f=ip$ and $\ker(p)=\ker(i)$. 
\end{proposition}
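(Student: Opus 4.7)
The plan is to follow the standard polar decomposition argument, treating the existence and uniqueness separately and invoking the continuous functional calculus (which is available since $f^\dag f$ is a bounded self-adjoint, indeed nonnegative, operator on $H$).

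For existence, I would first set $p = (f^\dag f)^{1/2}$, the unique nonnegative square root of the nonnegative operator $f^\dag f$, obtained from the spectral theorem. The key identity is
\[
  \norm{px}^2 = \inprod{px}{px} = \inprod{p^2 x}{x} = \inprod{f^\dag f x}{x} = \norm{fx}^2,
\]
which holds for every $x \in H$. In particular, $\ker(p) = \ker(f)$ and the assignment $p(x) \mapsto f(x)$ is well-defined and isometric on the range of $p$. Extending by continuity to its closure $\overline{\Im(p)}$ gives an isometry, and extending by zero on the orthogonal complement $\overline{\Im(p)}^\perp = \ker(p^\dag) = \ker(p)$ (using that $p$ is self-adjoint) produces a well-defined continuous linear map $i \colon H \to K$ with $i i^\dag i = i$, \ie a partial isometry. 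By construction, $\ker(i) = \ker(p)$ and $i \after p = f$.

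For uniqueness, suppose $f = i' p'$ with $p'$ nonnegative, $i'$ a partial isometry, and $\ker(p') = \ker(i')$. Since $i'$ is a partial isometry, $(i')^\dag i'$ is the projection onto $\ker(i')^\perp = \ker(p')^\perp$, which contains $\Im(p')$, so $(i')^\dag i' p' = p'$. Therefore
\[
  f^\dag f = p' (i')^\dag i' p' = (p')^2,
\]
and uniqueness of the nonnegative square root forces $p' = p$. Then $i' p = i p$, so $i'$ and $i$ agree on $\Im(p)$, hence on its closure by continuity; and they agree on $\ker(p) = \ker(i') = \ker(i)$ by being zero there. Thus $i' = i$.

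The main technical point to be careful with is invoking the continuous functional calculus to produce the nonnegative square root $p$: this is the one ingredient from operator theory that the argument genuinely depends on. Everything else is bookkeeping with kernels and ranges, using the self-adjointness of $p$ to identify $\ker(p)$ with $\overline{\Im(p)}^\perp$ so that the extension of $i$ by zero fits together into a continuous linear map on all of $H$.
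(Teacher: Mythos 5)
Your proof is correct. Note that the paper itself does not prove this proposition at all: its ``proof'' is just a citation to Halmos (problem 134), and the argument that citation points to is precisely the standard one you give, namely $p=(f^\dag f)^{1/2}$ via the functional calculus, the isometry $px \mapsto fx$ on $\Im(p)$ extended by continuity to $\overline{\Im(p)}$ and by zero on $\overline{\Im(p)}^\perp=\ker(p)$, and uniqueness from $(i')^\dag i' p' = p'$ forcing $(p')^2 = f^\dag f$. You handle the genuinely delicate points correctly --- well-definedness and isometry of $i$ on $\Im(p)$ from $\norm{px}=\norm{fx}$, the identification $\overline{\Im(p)}^\perp=\ker(p)$ by self-adjointness, and the fact that $(i')^\dag i'$ is the projection onto the initial space $\ker(i')^\perp \supseteq \Im(p')$ --- so your self-contained argument can stand in place of the paper's external reference.
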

\begin{proof}
  See~\cite[problem~134]{halmos:problembook}.
\end{proof}

\begin{note}
\label{strongpolar}
  The previous proposition stated the usual formulation of polar
  decomposition, but the unicity condition $\ker(p)=\ker(i)$ 
  is something of a red herring. It should be
  understood as saying that both $i$ and $p$ are uniquely determined
  on the orthogonal complement of $\ker(f)=\ker(p)=\ker(i)$. On each
  point of $\ker(f)$, one of $i$ and $p$ must be zero, but the other's
  behaviour has no restrictions apart from being a partial isometry or
  positive map, respectively. Dropping the unicity condition, we may
  take $p$ to be a positive map, by altering $i$ to be zero on $\ker(f)$,
  and $p$ to be nonzero on $\ker(f)$. More precisely, define $p'=p$ on
  $\ker(f)^\perp$ and $p'=\id$ on $\ker(f)$; since $\ker(f)$ is a
  closed subspace, $H \cong \ker(f) \oplus \ker(f)^\perp$, and this
  gives a well-defined positive operator $p' 
  \colon H \to H$. Similarly, setting $i'=i$ on $\ker(f)^\perp$ and
  $i'=0$ on $\ker(f)$ gives a well-defined partial isometry $i' \colon
  H \to K$, satisfying $f=i' p'$.

  In the following we have to consider bimorphisms rather than isomorphism because of the issues in~\ref{note:bimorphism}. In the finite-dimensional case, bimorphisms are isomorphisms.
\end{note}

\begin{lemma}
\label{positiveiso}
  Positive operators on Hilbert spaces are bimorphisms.
\end{lemma}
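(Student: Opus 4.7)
The plan is to show that $p$ is bijective with bounded inverse, which in $\Hilb$ is equivalent to being an isomorphism. I would carry this out in three steps.

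First, observe that $p$ is self-adjoint: since $\inprod{px}{x}$ is real for every $x \in H$, the polarization identity forces $p = p^\dag$.

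Second, extract from positivity a coercive estimate $\inprod{px}{x} \geq c\norm{x}^2$ for some $c > 0$. Cauchy--Schwarz then gives $\norm{px}\cdot\norm{x} \geq \inprod{px}{x} \geq c\norm{x}^2$, so $\norm{px} \geq c\norm{x}$, whence $p$ is injective and $\Im(p)$ is closed. Using self-adjointness, $\Im(p)^\perp = \ker(p^\dag) = \ker(p) = \{0\}$, so the range is also dense and therefore equals $H$. The lower bound furthermore yields $\norm{p^{-1}} \leq 1/c$, so $p$ is an isomorphism in $\Hilb$.

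The main obstacle is the second step: the bare condition $\inprod{px}{x}>0$ for nonzero $x$ does not imply coerciveness in infinite dimensions, as witnessed by the strictly positive operator $f \colon \ltwo(\field{N}) \to \ltwo(\field{N})$ given by $f(\varphi)(n) = \varphi(n)/n$ already highlighted earlier in the paper. The definition of \emph{positive} must therefore be read in the coercive sense $\inprod{px}{x} \geq c\norm{x}^2$, which is precisely the regime in which the operator $p'$ constructed in~\ref{strongpolar} fits whenever the underlying morphism has closed range.
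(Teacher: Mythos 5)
Your argument is correct under the reading of ``positive'' that you adopt, and it takes a genuinely different route from the paper. The paper's proof invokes the spectral theorem: it writes $p = u^\dag \after m \after u$ with $m$ multiplication by a function $f$ taking values in $\sigma(p)$, argues that positivity forces $f$ to take values in $\field{R}^{>0}$, and takes multiplication by $1/f$ as the inverse. Your route is more elementary: self-adjointness via polarization, then the coercive bound $\inprod{px}{x}\geq c\norm{x}^2$ gives $\norm{px}\geq c\norm{x}$, hence injectivity and closed range, density of the range from $\Im(p)^\perp=\ker(p^\dag)=\ker(p)=\{0\}$, and boundedness of the inverse with $\norm{p^{-1}}\leq 1/c$ --- no measure theory or functional calculus needed. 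What the spectral-theorem route buys is a concrete formula for $p^{-1}$; what your route buys is independence from the spectral theorem and a clear isolation of the one hypothesis doing the work, namely coercivity.

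The obstacle you flag in your second step is genuine, and it is worth stressing that the paper's own proof has the same unaddressed point at exactly the analogous place. With the literal definition ($\inprod{px}{x}>0$ for nonzero $x$), the lemma fails: the paper's earlier operator $\varphi(n)\mapsto\tfrac{1}{n}\varphi(n)$ on $\ltwo(\field{N})$ is positive in that sense but not surjective. In the spectral-theorem proof the gap is the step from ``$f$ takes values in $\field{R}^{>0}$'' to ``$m^{-1}$ is a morphism of $\Hilb$'': pointwise positivity does not make $1/f$ essentially bounded; one needs $\sigma(p)$ bounded away from $0$, i.e.\ $0\notin\sigma(p)$, which is exactly your coercive estimate. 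So the strengthened reading you propose is not a weakness of your approach relative to the paper's --- it is precisely what both proofs require. Your closing remark is also on target: the operator $p'$ of~\ref{strongpolar} is coercive exactly when the morphism being decomposed has closed range, so under the corrected reading Lemma~\ref{positiveiso} is sound, but the proof of Theorem~\ref{ltwoisessentiallyfull} then only covers closed-range morphisms (as indeed it must: any composite $v \after \ltwo f \after u$ with $u,v$ invertible has closed range, because partial isometries do and isomorphisms preserve closedness).
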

\begin{proof}
  Let $p \colon H \to H$ be a positive operator in $\Hilb$. 
  If $p(x)=0$ then certainly $\inprod{p(x)}{x}=0$ which contradicts positivity. Hence $\ker(p)=0$, and so $p$ is monic.

  To see that $p$ is epic, suppose that $p \circ f = p \circ g$ for parallel morphisms $f,g$. Then $\inprod{p \circ (f-g)(x)}{x}=0$ for all $x$. By positivity, For each $x$ there is $p_x>0$ such that $p \circ (f-g)x = p_x \cdot (f-g)(x)$. Hence $\inprod{(f-g)(x)}{x}=0$ for all $x$, that is, $f=g$ and $p$ is epic.
\end{proof}

\begin{definition}
\label{essentiallyfull}
  A functor $F \colon \cat{C} \to \cat{D}$ is \emph{essentially full}
  when for each morphism $g$ in $\cat{D}$ there exist $f$ in $\cat{C}$
  and $u,v$ in $\cat{C}_{\cong}$ such that $g = v \after Ff \after u$.

  It follows that the essential image of such a functor is all of $\cat{D}$.
\end{definition}

\begin{theorem}
\label{ltwoisessentiallyfull}
  The functor $\ltwo \colon \PInj \to \Hilb$ is essentially full.
\end{theorem}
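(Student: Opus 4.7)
The plan is to combine polar decomposition with Proposition~\ref{imageltwo} and Lemma~\ref{positiveiso}. Given an arbitrary morphism $g \colon H \to K$ in $\Hilb$, I want to peel off a partial isometry on one side and an isomorphism on the other, so that the partial isometry part can be fed into Proposition~\ref{imageltwo} and the isomorphism part absorbed into the unitary $u$.

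First, I would invoke the strong form of polar decomposition from~\ref{strongpolar}: write $g = i' \after p'$ where $i' \colon H \to K$ is a partial isometry and $p' \colon H \to H$ is a positive operator. This is exactly the variant of Proposition~\ref{polar} in which the unicity condition is dropped in order to make $p'$ positive on all of $H$ (instead of merely nonnegative, vanishing on $\ker(f)$).

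Next, Lemma~\ref{positiveiso} says that every positive operator on a Hilbert space is an isomorphism, so $p'$ lies in $\Hilbi$. Meanwhile, applying Proposition~\ref{imageltwo} to the partial isometry $i'$ produces a partial injection $f$ in $\PInj$ and unitaries $u,v$ with $i' = v \after \ltwo f \after u$. Substituting gives
\[
g \;=\; i' \after p' \;=\; v \after \ltwo f \after (u \after p').
\]
Since $u$ is unitary and $p'$ is an isomorphism, the composite $u \after p'$ is an isomorphism in $\Hilb$, and $v$ is an isomorphism as well. This matches the form required by Definition~\ref{essentiallyfull}, completing the proof.

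The only nontrivial ingredient is polar decomposition together with the fact that a positive operator (as opposed to merely a nonnegative one) is invertible; I expect no real obstacle since both facts are cited from earlier in the excerpt. The small subtlety is being careful to use the strong version from~\ref{strongpolar} rather than the standard one from Proposition~\ref{polar}: the standard version only gives a nonnegative $p$ with kernel equal to $\ker(g)$, which would not be invertible whenever $g$ is not injective, whereas the modified $p'$ is positive everywhere and hence an isomorphism.
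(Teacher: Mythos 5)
Your proposal is correct and follows essentially the same route as the paper: strong polar decomposition (\ref{strongpolar}), Proposition~\ref{imageltwo} for the partial isometry, and Lemma~\ref{positiveiso} to absorb the positive factor into one of the isomorphisms. The only (immaterial) difference is that you keep the positive part on the domain side and fold it into $u$, whereas the paper places it on the codomain side and folds it into $v$.
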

\begin{proof}
  Let $g$ be a morphism in $\Hilb$. By Proposition~\ref{polar}
  and~\ref{strongpolar}, we can write $g=pi$ for a positive morphism
  $p$ and a partial isometry $i$. Use Proposition~\ref{imageltwo} to
  decompose $i=v' \after \ltwo f \circ u$ for $f$ in $\PInj$ and
  unitaries $v',u$. Finally, Lemma~\ref{positiveiso} shows that $v=p
  \after v'$ in $\Hilbi$ satisfies $g=v \circ \ltwo f \circ u$.
\end{proof}

\begin{note}
\label{essentiallyfullnotreducible}
  Writing $\two$ for the ordinal $2=(0 \leq 1)$ regarded as a
  category, the category $\cat{C}^\two$ is the \emph{arrow category} of
  $\cat{C}$: its objects are morphisms of $\cat{C}$, and its morphisms
  are pairs of morphisms of $\cat{C}$ making the square commute.
  A functor $F \colon \cat{C} \to \cat{D}$ is essentially
  full when $F^\two \colon \cat{C}^\two \to \cat{D}^\two$ is
  essentially surjective on objects. From this point of view
  Definition~\ref{essentiallyfull} is quite natural. Nonetheless we
  might consider weakening it to take $u=\id$ or
  $v=\id$. But this would break the previous theorem.
  For example, if $g \colon \ltwo(X) \to \ltwo(Y)$ is a morphism in
  $\Hilb$, there need not be $f \colon X \to Y$ in $\cat{PInj}$ and
  $v$ in $\Hilbi$ with $g = v \circ \ltwo f$. For a
  counterexample, take $X=Y=\{a,b\}$, and $g(a)=g(b)=a$; if $g=v \circ
  \ltwo f$, then $(v \circ \ltwo f)(a) = (v \circ \ltwo f)(b)$, so
  $(\ltwo f)(a)=(\ltwo f)(b)$, so $f(a)=f(b)$, whence $f$ cannot be a
  partial injection.  
  Similarly, because of the dagger, if $g \colon \ltwo(X) \to
  \ltwo(Z)$ is a morphism in 
  $\Hilb$, there need not be $f \colon Y \to Z$ in $\PInj$ and $u$ in
  $\Hilbi$ with $g = \ltwo f \circ u$. 
\end{note}

\section{The future}\label{sec:conclusion}

\begin{note}
\label{reconstruction}
  Theorem~\ref{ltwoisessentiallyfull} naturally raises a coherence
  question: is there any regularity to the bimorphisms $u$ and $v$ that
  enable us to write an arbitrary morphism of $\Hilb$ in the form $v
  \after \ltwo f \after u$? How do they behave under composition?
  Curiously enough, essentially full functors do not seem to have been
  studied in the categorical literature at all. The results in this
  article suggest such a study.

  It would be very interesting to reconstruct $\Hilb$ (up to
  equivalence) from $\Hilbi$ and $\PInj$ via the $\ltwo$ functor.
  The objects are easily recovered, because they are the same as those
  of $\Hilbi$. Theorem~\ref{ltwoisessentiallyfull} also lets us
  recover the homsets and identities, as soon as we can identify when two
  morphisms in $\Hilb$ of the form $v \after \ltwo f \after u$ are
  equal. The main problem is how to recover composition, which 
  requires a way to turn $\ltwo g \after v \after \ltwo f$ into $w
  \after \ltwo h \after u$. (Note that turning $\ltwo g \after v$
  into $w \circ \ltwo h$ would be sufficient, because we could then
  use functoriality of $\ltwo$ and composition in
  $\PInj$. But~\ref{essentiallyfullnotreducible} obstructs this; the
  bimorphism $v$ in the middle is crucial.) 
  This will likely lead into bicategorical territory.
\end{note}

\begin{note}
  The $\ltwo$--construction has a continuous counterpart, that turns
  a measure space $(X,\mu)$ into a Hilbert space $L^2(X,\mu)$ of
  square integrable complex functions on $X$. The $L^2$--construction
  is quite fundamental and well-studied, but surprisingly enough
  functorial aspects seem not to have been considered before.
  One possibility is to mimic Definition~\ref{def:ltwo}, and endow the
  category of measure spaces with essential injections $(X,\mu) \to
  (Y,\nu)$ as morphisms, \ie subsets $R \subseteq X \times Y$ such
  that $\nu(\{ y \mid xRy\})=0$ for all $x \in X$ and $\mu(\{x \mid
  xRy \})=0$ for all $y \in Y$.

  The importance of $L^2$--spaces lies in the following formulation of
  the spectral theorem: every normal operator $f \colon H \to H$
  is of the form $f=u^{-1} \circ g \circ u$ for a unitary $u \colon H
  \to L^2(X,\mu)$ and an operator $g$ induced by multiplication with a
  measurable function $X \to \field{C}$. This perspective warrants
  choosing complex measurable functions as (endo)morphisms on
  measure spaces, with multiplication for
  composition. With~\ref{strongpolar} in mind, we could even
  restrict to a groupoid of positive maps.
  A solution to~\ref{reconstruction}
  could then be regarded as reconstructing quantum mechanics (as
  embodied by $\Hilb$) from its continuous, quantitative aspects
  (encoded by the $L^2$ functor), and its discrete,
  qualitative aspects (encoded by the $\ltwo$ functor).

  At any rate, the continuous cousin $L^2$ of $\ltwo$ poses
  an interesting research topic. 
\end{note}

\begin{note}
  Letting $\mathcal{L}$ be the class of positive 
  morphisms, and $\mathcal{R}$ the class of partial isometries in
  $\Hilb$:
  \begin{enumerate}
  \item every morphism $f$ can be factored as $f=rl$ with $l \in
    \mathcal{L}$ and $r \in \mathcal{R}$;
  \item every commutative square as below with $l \in \mathcal{L}$ and
    $r \in \mathcal{R}$ allows a unique diagonal fill-in $d$ making
    both triangles commute.
    \[\xymatrix{
      \cdot \ar[r] \ar_-{l}[d] & \cdot \ar^-{r}[d] \\
      \cdot \ar[r] \ar@{-->}^-{d}[ur] & \cdot
    }\]
  \end{enumerate}
  The second property would follows from Lemma~\ref{positiveiso} for isomorphisms $l$. The established notion of
  \emph{orthogonal factorization system} additionally demands that (3) both
  $\mathcal{L}$ and $\mathcal{R}$ are closed under composition, and
  (4) all isomorphisms are in both $\mathcal{L}$ and
  $\mathcal{R}$. But (3) is not satisfied
  by~\ref{compositionpartialisometries}, and the map $-1 \colon H \to 
  H$ is a counterexample to (4). 

  Write $\three$ for the ordinal $3=(0 \leq 1 \leq 2)$, regarded as a
  category. Then objects of $\cat{C}^\three$ are composable pairs of morphisms.
  Recall that a \emph{functorial factorization} is a functor $F \colon
  \cat{C}^\two \to \cat{C}^\three$ that splits the composition
  functor. Lemma~\ref{positiveiso} ensures that polar decomposition
  at least provides a functorial factorization system.
  It is usual to require extra conditions on top of a functorial
  factorization, such as in a \emph{natural weak factorization
    system}. For details we refer
  to~\cite{grandistholen:naturalfactorizationsystems}. It leads too
  far afield here, but polar decomposition does not satisfy the axioms
  of a natural weak factorization system.

  In short, polar decomposition unquestionably provides a notion of
  factorization. But it does not fit existing categorical
  notions, despite the fact that factorization has been a topic of quite intense 
  study in category theory~\cite{freydkelly:factorisation,
    bousfield:factorization, grandistholen:naturalfactorizationsystems,
    korostenskitholen:factorization, rosickytholen:laxfactorization}.
  This is an interesting topic for further investigation.
\end{note}

\bibliographystyle{plain}
\bibliography{ltwo}

\end{document}